\date{}
\newcommand{\Q}{\mathbb{Q}}
\newcommand{\Z}{\mathbb{Z}}
\newtheorem{prop}{Proposition}[subsection]
\newtheorem{lemma}{Lemma}[subsection]
\newtheorem*{theorem*}{Theorem}
\theoremstyle{remark}
\newtheorem{remark}{Remark}[subsection]
\theoremstyle{definition}
\newtheorem{defin}{Definition}[subsection]
\begin{document}

\title{Index polynomials for virtual tangles}

\author{Nicolas Petit}

\maketitle
\begin{center}Oxford College of Emory University (petitnicola@gmail.com)\end{center}

\begin{abstract}
\noindent We generalize the index polynomial invariant, originally introduced in \cite{cobordismofknotsonsurfaces}, \cite{henrich} and \cite{longframedfti}, to the case of virtual tangles. Three polynomial invariants result from this generalization; we give a brief overview of their definition and some basic properties.\end{abstract}

\section{Introduction}

Ever since its introduction in \cite{henrich} and \cite{cobordismofknotsonsurfaces}, index polynomial invariants of virtual knots have been a very popular topic, see for example \cite{affineindexpolynomial}, \cite{linkingnumberaffineindex}, \cite{indexpolyinvariantoflinks}.
Because of their construction, many of these turn out to be Vassiliev invariants of order one for virtual knots, a notion first introduced in \cite{virtualknottheory} as a natural generalization of finite-type invariants of classical knots.
The author himself worked on generalizing the results of \cite{henrich} to the cases of framed and/or long virtual knots \cite{longframedfti}, resulting in (amongst others) a polynomial invariant for long virtual knots which happens to be a Vassiliev invariant of order one. 

It then seemed natural to wish to extend said results, and in particular the Henrich-Turaev polynomial, to the case of virtual tangles. 
We define three different polynomial invariants of virtual tangles, including two infinite families parametrized by rational numbers $a,b$, all of which are finite-type invariants of order one for virtual tangles that generalize the Henrich-Turaev polynomial.
We show that these invariants for a sequence of increasing strength, and that if we limit ourselves to the case of virtual string links (possibly with permutation of the endpoints) the three polynomials are all additive under connected sum.

This paper is organized as follows: section \ref{virtualknotsandtangles} provides a brief review of the definition of virtual knots, virtual tangles, and linking number, that will be needed in the rest of the paper. Section \ref{longknotinvariants} reviews Vassiliev invariant and briefly summarizes the results of \cite{henrich} and \cite{longframedfti}. We then define the polynomial invariants for tangles in section \ref{tangleinvariants}, before studying some of their properties in section \ref{strengthofinvariants}.

\section{Background}
\subsection{Virtual knots and tangles}
\label{virtualknotsandtangles}
Virtual knots were first introduced by Kauffman in \cite{virtualknottheory}. They can be defined in multiple ways: we can think of a virtual knot as a knot projection with three types of crossing (classical positive, classical negative and virtual), up to Reidemeister moves and virtual Reidemeister moves:

\begin{figure}[!h]
\centering
\includegraphics[scale=0.1]{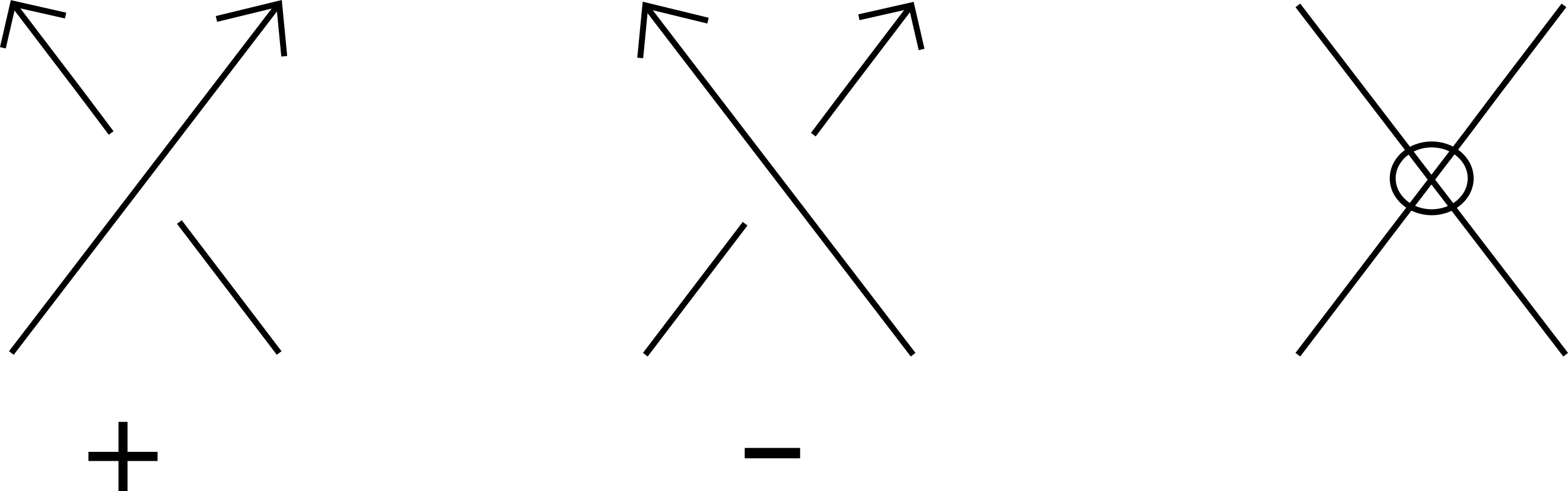}
\caption{The three types of crossing: positive, negative and virtual.}
\end{figure}

\begin{figure}[!h]
\centering
\includegraphics[scale=.08]{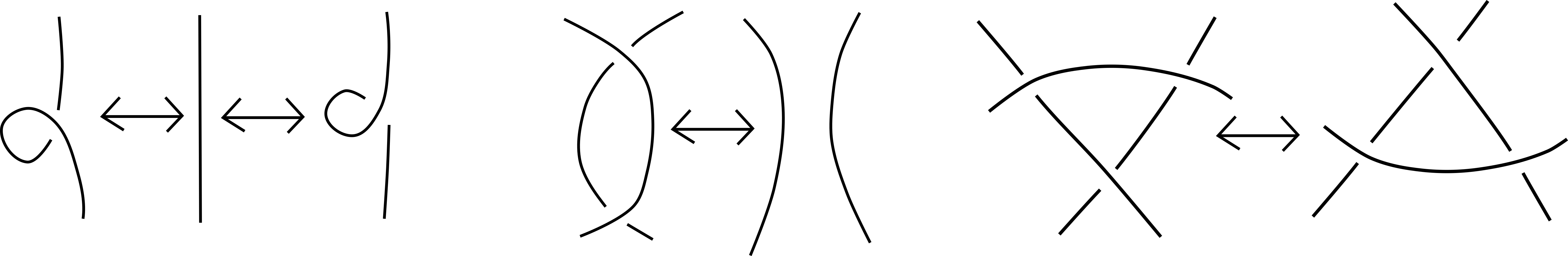}
\caption{The classical Reidemeister moves.}
\label{RM}
\end{figure}

\begin{figure}[!h]
\centering
\includegraphics[scale=.08]{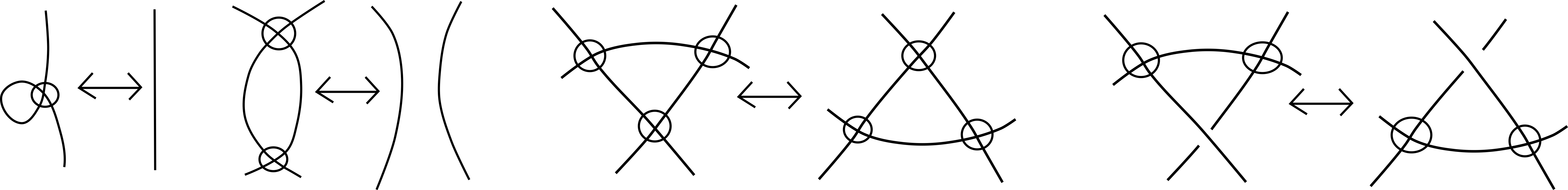}
\caption{The virtual Reidemeister moves.}
\label{virtualRM}
\end{figure}

We can also think of virtual knots as equivalence classes of Gauss diagrams up to Reidemeister moves, or as equivalence classes of knots in thickened surfaces up to stabilization/destabilization. We will mostly focus on the diagrammatic approach in this paper.

We will also work with flat knotlike objects in this paper: given a diagrammatic knot theory in terms of Reidemeister moves, the associated flat theory is obtained by adding the crossing change (CC) move, pictured in Fig. \ref{CCmove}. This move allows to let a strand cross another, which for virtual knots means we're working under homotopy rather than isotopy. 
Since the over/under information of the crossings is now irrelevant, we typically draw these knots as immersed curves, with the implicit understanding that either one of the two resolutions can be picked.

\begin{figure}[!h]
\centering
\includegraphics[scale=.12]{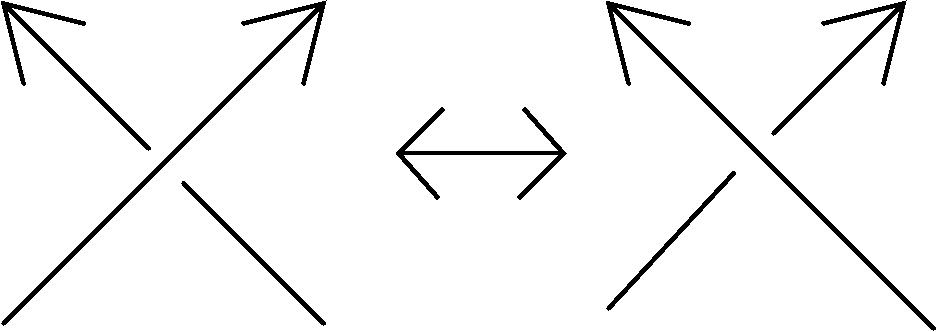}
\caption{The CC move.}
\label{CCmove}
\end{figure}

A long virtual knot is a long virtual knot diagram with classical and virtual crossings up to Reidemeister moves. 
They can also be represented by Gauss diagrams with a distinguished point (representing the ``point at infinity'') or as embeddings of knots in thickened surfaces with boundary. 
For more details about how these definitions are related, see \cite{longframedfti}.

Finally, a virtual tangle is a collection of virtual knots and long virtual knots, typically linked with each other in some fashion. 
We will typically represent tangles in a square, where the ends of the long component are fixed distinguished points on the top or bottom side of the square. 
If the tangle has $m$ endpoints at the top and $n$ endpoints at the bottom, we say that the tangle is an $(m,n)$ tangle. 
We usually draw the top and bottom of the square, but not the sides, see Fig. \ref{virtualtangleexample}.

We can define the connected sum of two virtual tangles as the ``stacking'' of one tangle above another. 
Of course, this operation is only defined if the two tangles have the same number of distinguished points alongside the common boundary, and their orientations match; because of this, the operation is, generally speaking, not commutative. 
We will denote the connected sum of tangle $T$ and $U$ by $T\#U$, in which we stack $T$ above $U$.

\begin{remark}
When talking about a ``tangle'' we will typically work in the most general setting: we allow our tangle to contain closed components, and we don't assume that it has the same number of distinguished points in both boundaries, or that long components necessarily connect top to bottom.
\end{remark}

\begin{figure}[!h]
\centering
\includegraphics[scale=.13]{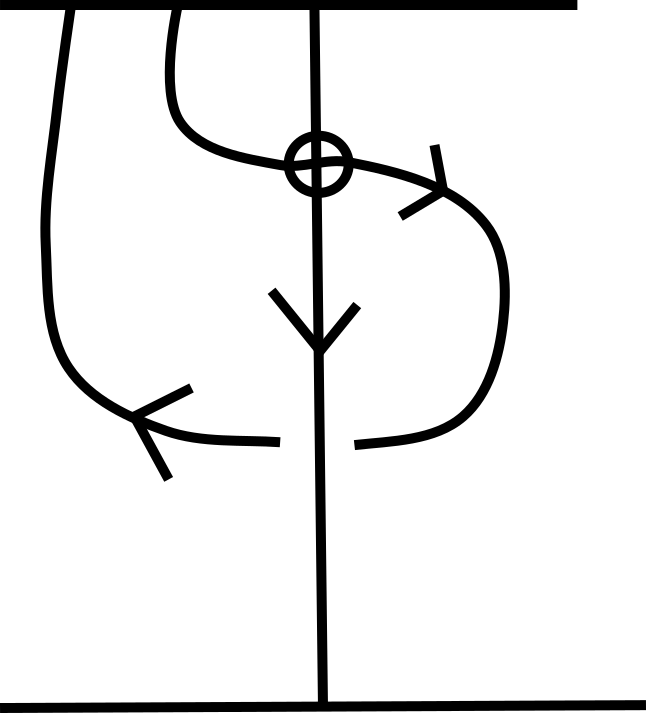}
\caption{An example of a $(3,1)$ virtual tangle.}
\label{virtualtangleexample}
\end{figure}

A special case we'll consider is that of an $(n,n)$ virtual tangle where each long component connects a distinguished point at the top with a distinguished point at the bottom.
These tangles could possibly have closed components, and we can always take the connected sum of two such tangles if they're unoriented.
These tangles lead to two different specializations: braids on $n$ stands and virtual string links.

A virtual braid on $n$ strands is an $(n,n)$ tangle, with no closed component, where each component connects a distinguished point at the top with a distinguished point at the bottom, and in which every strand is always moving downwards (so the strands have no local extrema).
Virtual braids on $n$ strands form a group, called the virtual braid group, and every virtual link can be represented as a virtual braid on a given number of strands \cite{virtualbraidsLmove}.
There is a rich literature on virtual braids, which we encourage the interested reader to consult.
 
A virtual sting link on $n$ components is a special type of $(n,n)$ tangle, with no closed component, with the same set of distinguished points at the top and bottom of the square, and such that the components connect the same distinguished point at the top and bottom. 
This means that there is a natural labeling on the components (given by the order of the points). We also make the choice to always orient virtual string links from top to bottom.
With all these observations, it is clear that virtual string links with $n$ components form a monoid, with the identity element given by the pure braid on $n$ strands.
Because the ordering and orientation of the components is well-defined, we can represent virtual string links as Gauss diagrams with $n$ core circles (typically drawn as straight lines, since all the components are long).  An example of a $3$-component virtual string link is in Fig. \ref{virtualstringlinkexample}.
Also note that the closure of an $n$-component virtual string link is an $n$-component virtual link.

\begin{figure}[!h]
\centering
\includegraphics[scale=.16]{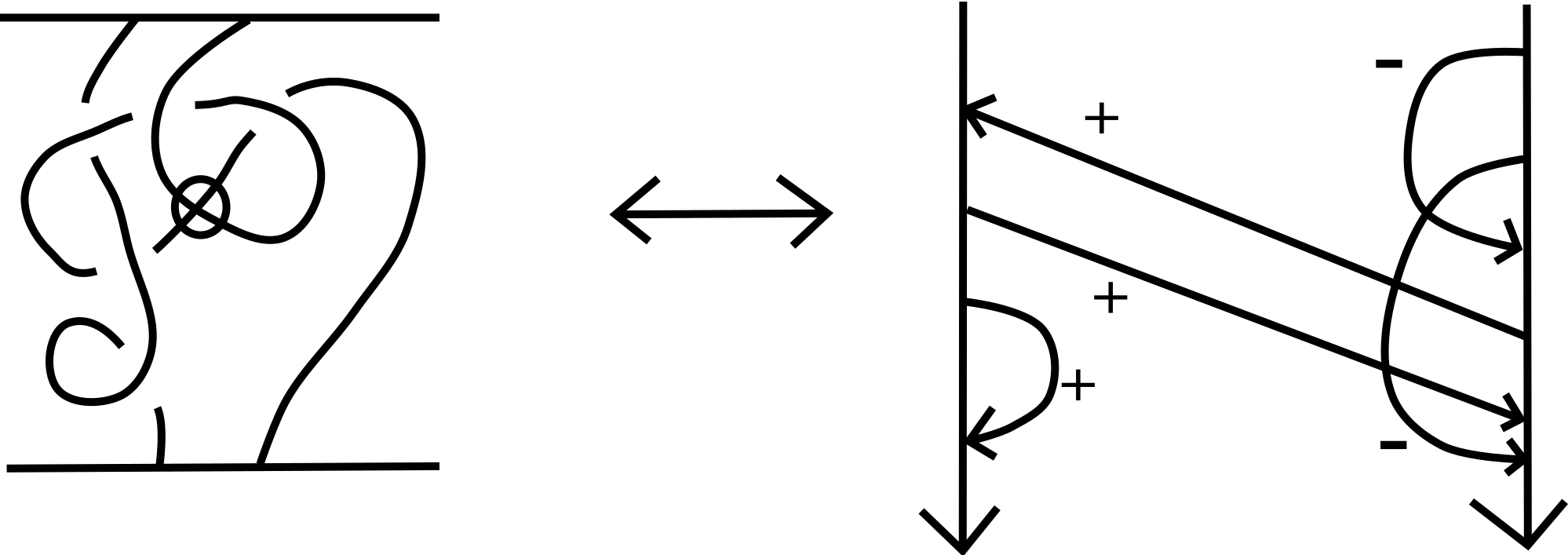}
\caption{A simple virtual string link on two components, and its Gauss diagram.}
\label{virtualstringlinkexample}
\end{figure}

Finally, let's briefly review the notion of linking number of a virtual link.
For oriented classical links, we have three equivalent definitions of the linking number between two components: we can either take the writhe of all the crossings where component one goes over component two, or take the writhe of all crossings where component two goes over component one, or take the overall writhe of the two components and divide by two.
$$lk(T_1, T_2)=\frac{1}{2}\sum_{d\in T_1\cap T_2}sgn(d)=\sum_{d\in T_1\text{ over } T_2}sgn(d)=\sum_{d\in T_2\text{ over } T_1}sgn(d)$$
As a result of the definition, we clearly have that $lk(T_1, T_2)=lk(T_2, T_1)$, so the linking number only depends on the pair of components.

However, that fact is not true anymore for virtual links, as the number of times $T_1$ goes over $T_2$ need not be the same as the number of times $T_2$ goes over $T_1$. To see this, just take any classical link and change one crossing from classical to virtual.
To be able to talk about a virtual linking number we must then pick one of the definitions: we will use the over definition, so $$vlk(T_1, T_2)=\sum_{d\in T_1\text{ over } T_2}sgn(d).$$
Note that the above observations imply that $vlk(T_1, T_2)\neq vlk(T_2, T_1)$, so we need to consider the \emph{ordered} pair of components $(T_1, T_2)$ when computing a virtual linking number. The difference $vlk(T_1, T_2)-vlk(T_2, T_1)$ is called the wriggle number $W(T_1, T_2)$ of the pair of components \cite{linkingnumberaffineindex}; clearly the wriggle number is an invariant of the ordered pair, and $W(T_2, T_1)=-W(T_1, T_2)$, so $|W(T_1, T_2)|$ is an invariant of the (unordered) pair of components.

\begin{lemma}
\label{anylinkingnumber}
Given any pair of linking numbers $vlk(L_1, L_2), vlk(L_2, L_1)$, there is a virtual link $L$ that realizes those linking numbers.
\end{lemma}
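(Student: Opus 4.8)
The plan is to build $L$ directly, exploiting the fact that in the virtual category every Gauss diagram is realizable as a virtual link diagram, with the classical crossings being exactly the prescribed chords and all auxiliary crossings virtual. Write $p=vlk(L_1,L_2)$ and $q=vlk(L_2,L_1)$ for the two target values; since each $vlk$ is a sum of signs $\pm 1$, any realizable pair lies in $\Z^2$, and I claim conversely that every pair $(p,q)\in\Z^2$ is realized.

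First I would set up a Gauss diagram $D$ on two core circles, labelled $L_1$ and $L_2$, carrying no self-chords. On it I place $|p|$ chords joining $L_1$ to $L_2$ at which $L_1$ is the over-strand and $L_2$ the under-strand, each decorated with the sign $sgn(p)$; and $|q|$ chords joining $L_2$ to $L_1$ at which $L_2$ is the over-strand, each decorated with the sign $sgn(q)$. This is a perfectly legal Gauss diagram on two circles, hence is realized by some two-component virtual link $L=L_1\cup L_2$.

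Next I would simply read the linking numbers off $D$. The classical inter-component crossings of $L$ are exactly the chords of $D$, and the only ones contributing to $vlk(L_1,L_2)$ are those at which $L_1$ passes over $L_2$, namely the $|p|$ chords of the first kind, each contributing $sgn(p)$, for a total of $p$. Symmetrically the $|q|$ chords of the second kind give $vlk(L_2,L_1)=q$. Hence $L$ realizes the prescribed pair.

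The one point requiring care — and the step I would treat as the main obstacle — is guaranteeing that the realizing diagram introduces no further classical inter-component crossings beyond the chosen chords, since any stray classical crossing would corrupt the count. This is precisely what realizability of arbitrary Gauss diagrams in the virtual category buys us: unlike the classical setting no planarity obstruction arises, so the extra crossings forced by drawing $D$ in the plane may all be taken virtual, and virtual crossings enter neither $vlk$. If one prefers an explicit picture to invoking realizability, the same count is obtained by forming the diagrammatic connected sum of $|p|$ copies of the elementary clasp in which $L_1$ crosses once over $L_2$ (the return arc routed through a virtual crossing) together with $|q|$ copies of its mirror with the roles of $L_1$ and $L_2$ exchanged, choosing signs by $sgn(p)$ and $sgn(q)$; the verification then reduces to checking that each elementary piece contributes to exactly one of the two linking numbers.
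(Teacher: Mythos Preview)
Your proof is correct and follows essentially the same approach as the paper: the paper also constructs an explicit Gauss diagram on two core circles with the appropriate number and type of inter-component chords, then appeals (implicitly) to the fact that every Gauss diagram is realizable by a virtual link. Your treatment is more detailed --- you spell out the realizability step and the check that no stray classical crossings appear --- but the underlying construction is the same.
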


\begin{proof}
We can easily produce a Gauss diagram of such a link. The one in Fig. \ref{gaussanylinkingnumber} has $vlk(L_1, L_2)=a$ and $vlk(L_1, L_2)=-b$.
\end{proof}

\begin{figure}[!h]
\centering
\includegraphics[scale=.12]{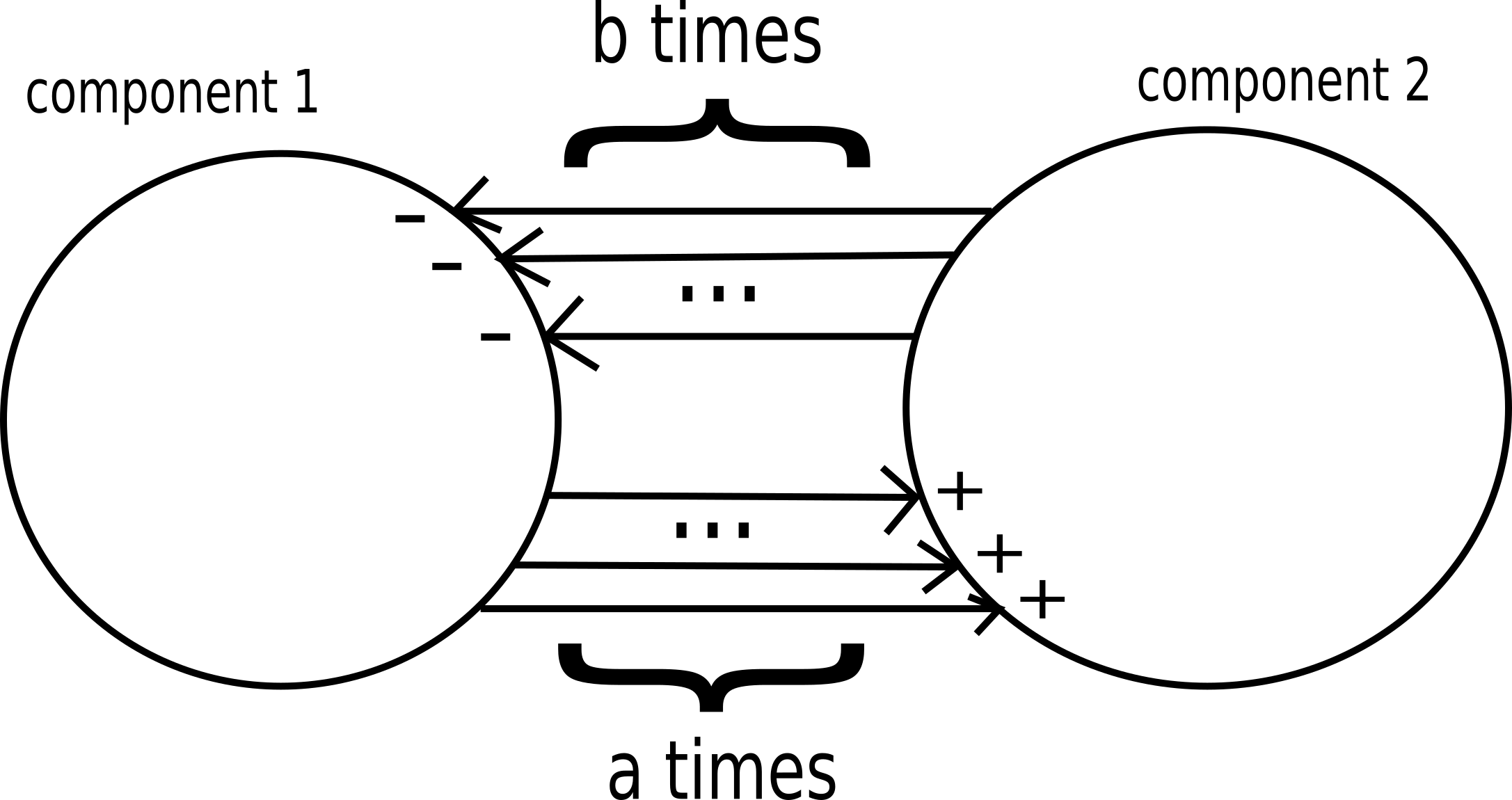}
\caption{The Gauss diagram of a link with $vlk(L_1, L_2)=a$ and $vlk(L_2, L_1)=-b$.}
\label{gaussanylinkingnumber}
\end{figure}

\begin{remark}
A diagram that realizes such a link can be obtained by starting with the unlink, applying the virtualization move to a crossing to switch the over/under stands, then adding extra crossings via a Reidemeister move 2 and changing one of the crossings to be virtual, see Fig. \ref{anylinkingnumbers}.
\end{remark}

\begin{figure}[!h]
\centering
\includegraphics[scale=.095]{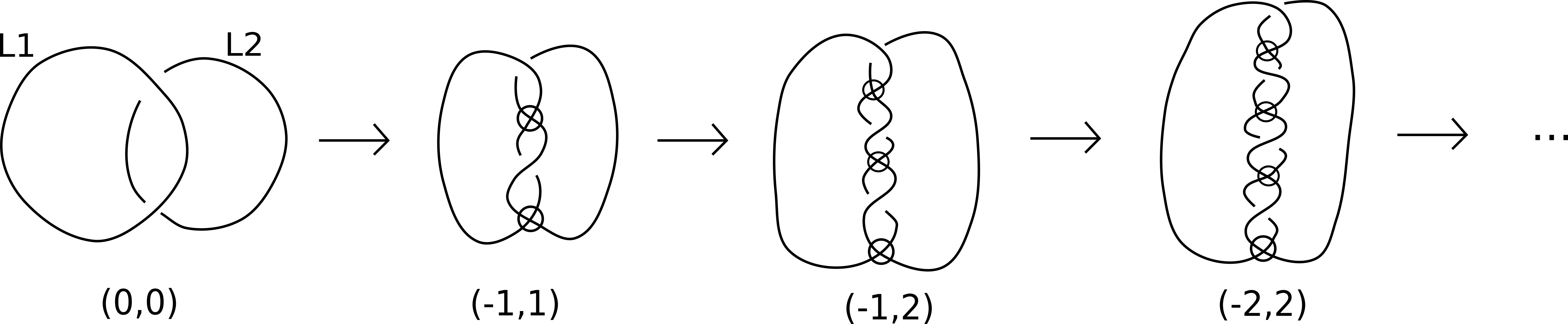}
\caption{How to construct a virtual link diagram with any possible pair of linking numbers. The ordered pairs under the pictures are the values $(vlk(L_1, L_2), vlk(L_2, L_1))$.}
\label{anylinkingnumbers}
\end{figure}

%---------------------------------------------------------------------------------

\subsection{Vassiliev invariants of virtual knots}
\label{longknotinvariants}
Vassiliev invariants for virtual knots were first introduced by Kauffman in \cite{virtualknottheory} as a natural generalization of the notion of a finite-type invariant for classical knots.
Formally, a Vassiliev invariant of virtual knots is the extension of a virtual knot invariant to the category of singular virtual knots.
These are virtual knots with an extra type of crossing (transverse double points), modulo the Reidemeister moves for virtual knots and some extra moves, called rigid vertex isotopy.

We extend the virtual knot invariant $\nu$ to singular virtual knots by resolving every double point as a weighted average of its two possible resolution, a positive and a negative crossing, see Fig. \ref{doublepointresolution}.
We then say that $\nu$ is a finite-type invariant, or Vassiliev invariant, of order $\leq n$ if it vanishes on every knot with more than $n$ double points (i.e. the $n+1$st derivative vanishes). 
Examples of finite-type invariants according to this definition are the coefficient of $x^n$ in the Conway polynomial or the Birman coefficients. 
\begin{figure}[!h]
\centering
\includegraphics[scale=.07]{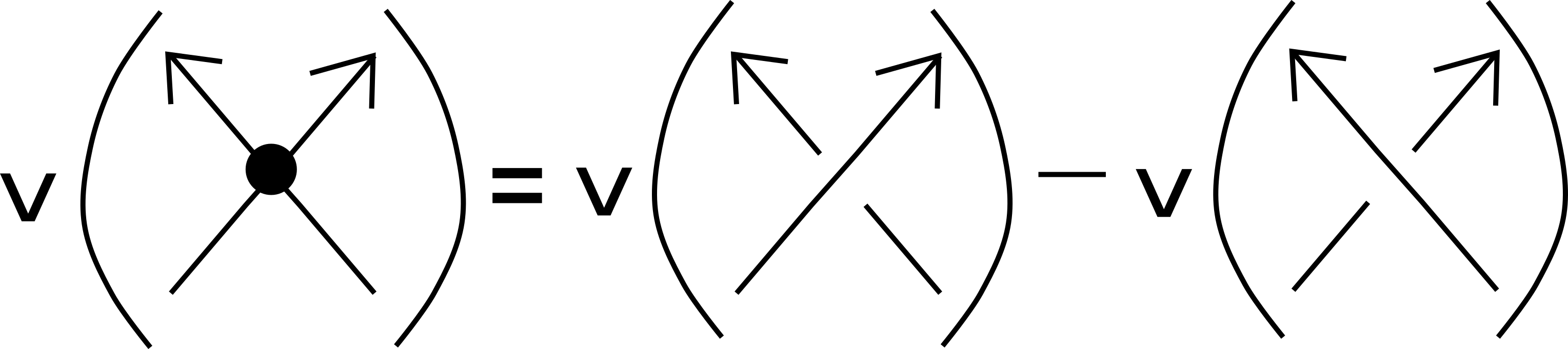}
\caption{How to resolve a double point.}
\label{doublepointresolution}
\end{figure}

We can naturally extend the above ideas to the case of virtual tangles. A finite-type or Vassiliev invariant of virtual tangles will be an extension of a virtual tangle invariant to virtual tangles with double points, where we resolve a double point with the same relation pictured in Fig. \ref{doublepointresolution}. 
Then $\nu$ is a Vassiliev invariant of order $\leq n$ for virtual tangles if it vanishes on any tangle with more than $n$ double points; we say $\nu$ is of order $n$ if it is of order $\leq n$ but not of order $\leq n-1$.
As far as we know, this is the first use of Vassiliev invariants of virtual tangles in the literature.

\begin{remark}
Around the same time, Goussarov, Polyak and Viro \cite{GPV} were independently developing a different generalization to virtual knots of finite-type invariants, in terms of a new type of crossing called semi-virtual. We will not work with these finite-type invariants in this paper, though the author and a collaborator are looking into the relation between the invariants of section \ref{tangleinvariants} and GPV invariants.
\end{remark}

Little work seems to have been done in the theory of Vassiliev invariants of virtual knots until \cite{cobordismofknotsonsurfaces}, \cite{henrich}, whose results were then generalized by the author to the case of long and/or framed virtual knots. 
While \cite{henrich} has a sequence of three Vassiliev invariants of increasing strength, we will only focus on generalizing the polynomial invariant (also called the Henrich-Turaev polynomial) to the case of virtual tangles; as such, we recall here those definitions and theorems.

\begin{remark}
We typically smooth crossings according to orientation and give the resulting object the inherited orientation, see Fig. \ref{smoothingofacrossing}.
If both stands belong to the same component, as a result of the smoothing we will get a two-component object.
\end{remark}

\begin{figure}[!h]
\centering
\includegraphics[scale=.1]{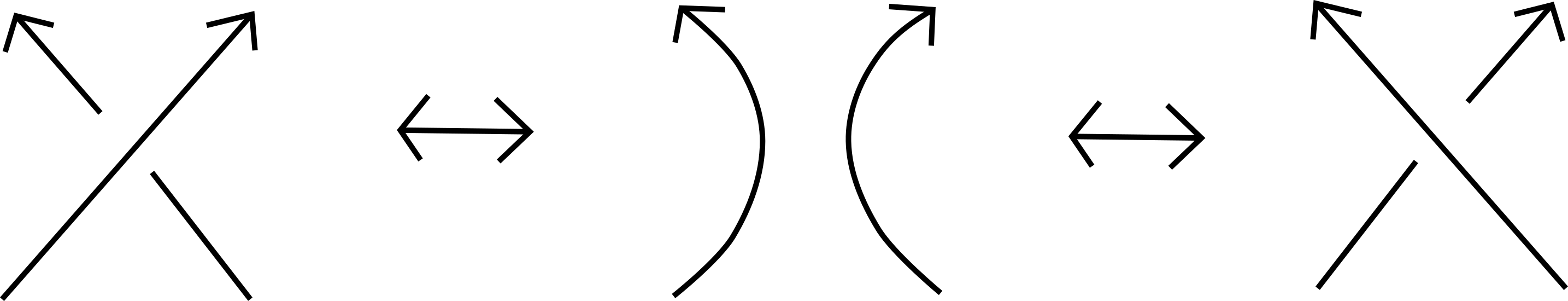}
\caption{How to smooth a crossing according to orientation. Note that positive and negative crossings give the same oriented smoothing.}
\label{smoothingofacrossing}
\end{figure}

\begin{defin}
Take a (long) virtual knot and a classical crossing $d$ in it, and smooth the crossing according to orientation. The result of the operation is a $2$-component link: pick an arbitrary ordering of the two components.
Now consider the flat version of this ordered virtual link, and assign a sign to each crossing involving both components as in Fig. \ref{flatlinksign} (mnemonic: component $1$ lies above component $2$).

\begin{figure}[!h]
\centering
\includegraphics[scale=.1]{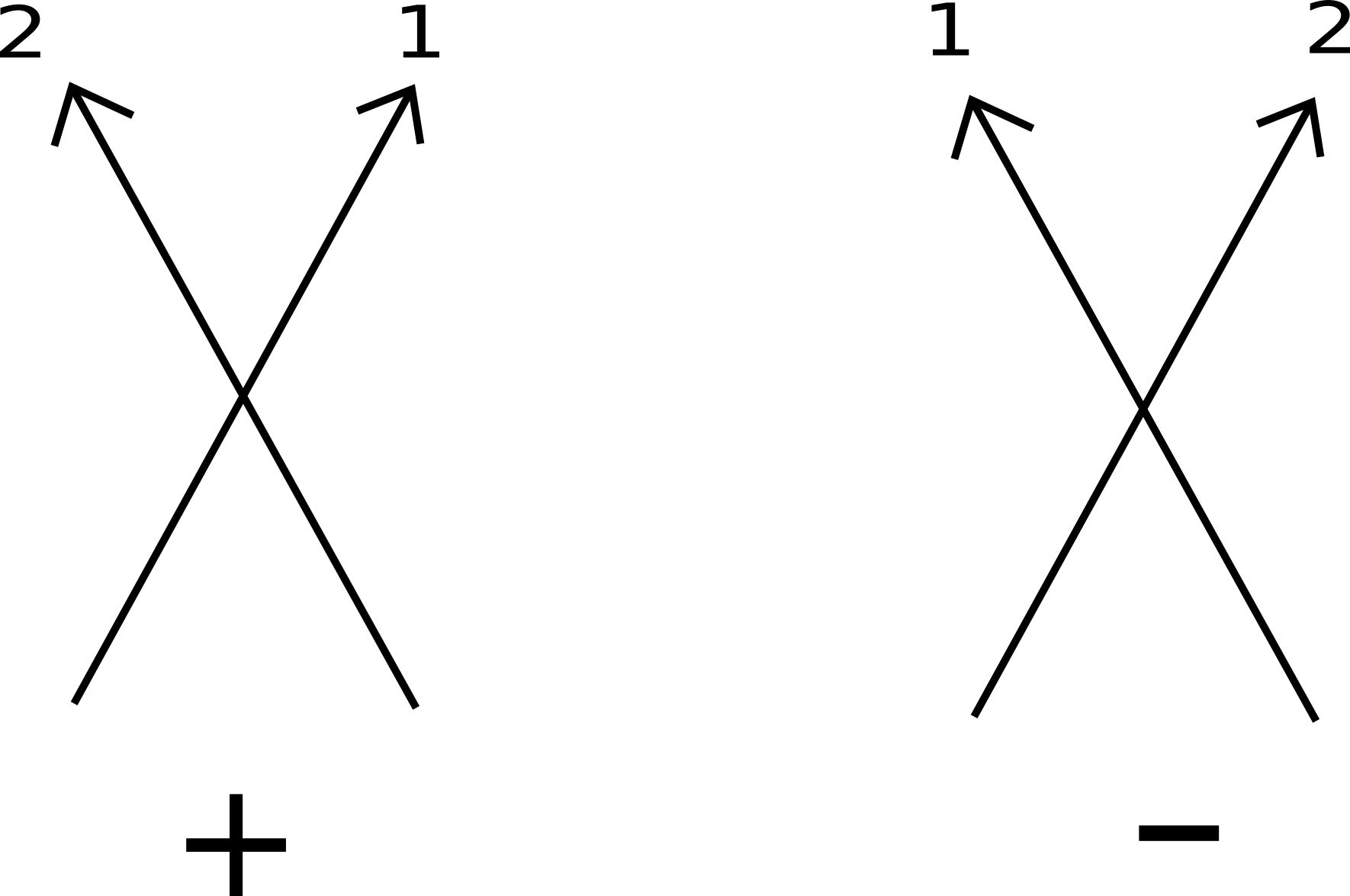}
\caption{How to compute the sign of a crossing involving both components.}
\label{flatlinksign}
\end{figure}

The intersection index of $d$ is the quantity defined by $$i(d)=\sum_{c\in L_1\cap L_2} sgn(c)$$
where $c$ is a crossing involving both components.
\end{defin}

\begin{remark}
Note that a different choice of component $1$ and $2$ would change all the signs, so $i(d)$ isn't well-defined, but $|i(d)|$ is.
\end{remark}

\begin{prop}[\cite{henrich}]
Let $K$ be a virtual knot. The polynomial $$p_t(K)=\sum_{c}sgn(c)(t^{|i(d)|}-1)\in \Z[t],$$where $c$ is taken over all the classical crossings of $K$, is an order one Vassiliev invariant of virtual knots.
\end{prop}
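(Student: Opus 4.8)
The plan is to prove two separate assertions: that $p_t$ is unchanged by every classical and virtual Reidemeister move, and that, once extended to singular knots via the resolution of Fig.~\ref{doublepointresolution}, it vanishes on knots with at least two double points but not on all knots with one. The observation that drives both halves is that the index $i(d)$ is read off the \emph{flat} diagram obtained after smoothing $d$ according to orientation; equivalently, $i(d)$ is the flat linking number of the two resulting components. Consequently $|i(d)|$ is insensitive to the over/under data at every crossing, and is affected only by a move that alters the underlying flat projection in a neighborhood of $d$. I would isolate this remark first, since it is used repeatedly below.

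For invariance I would examine each move locally, splitting $p_t(K)=\sum_c sgn(c)(t^{|i(c)|}-1)$ into the crossings created or destroyed by the move and those left untouched. Under a first Reidemeister move the new kink crossing $c$ smooths into a small loop bounding a disk that meets nothing else, so $i(c)=0$ and its term $sgn(c)(t^{0}-1)$ vanishes; moreover, for every other crossing $d$ the kink becomes a self-crossing of a single component after smoothing $d$, so it never enters the count defining $i(d)$. Under a second Reidemeister move the two new crossings have opposite signs, and a short local computation shows their intersection indices agree, so the terms $\pm(t^{|i|}-1)$ cancel; the same two crossings enter $i(d)$ with opposite signs for each remaining $d$, and cancel there as well. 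The virtual and mixed moves involve only virtual crossings, which carry no sign and never appear in the signed count defining any $i(d)$, so $p_t$ is manifestly preserved. The third Reidemeister move is the delicate case, and I expect it to be the main obstacle: here I would verify that each of the three participating crossings retains both its sign and its intersection index (the before/after local flat pictures differ by a flat third Reidemeister move, which preserves the signed count defining each index), while the crossings outside the triangle are untouched.

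The finite-type statement is then a short difference computation built on the same flat-invariance principle. Extending $p_t$ by the relation of Fig.~\ref{doublepointresolution}, the first difference at a double point $d$ is $p_t(K_{+})-p_t(K_{-})$, where $K_{+}$ and $K_{-}$ differ only in the over/under data at the site of $d$. Since this leaves the flat projection fixed, $|i(c)|$ is the same in $K_{+}$ and $K_{-}$ for every crossing $c$; all terms with $c\neq d$ therefore cancel, and the surviving term yields $p_t(K_{+})-p_t(K_{-})=2\bigl(t^{|i(d)|}-1\bigr)$. For two double points $d_1,d_2$ the second difference regroups as the $d_1$-difference taken once with $d_2$ resolved positively and once negatively; because resolving $d_2$ again leaves the flat projection fixed, $i(d_1)$ is the same in both cases, and the two copies of $2(t^{|i(d_1)|}-1)$ cancel. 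This gives order $\leq 1$. To see that the order is exactly one it suffices to exhibit a single virtual knot carrying a classical crossing $d$ with $|i(d)|\geq 1$ --- for instance the virtual trefoil --- and declare $d$ singular: the first difference $2(t^{|i(d)|}-1)$ is then nonzero, so $p_t$ is not of order $\leq 0$.
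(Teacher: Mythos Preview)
Your argument is correct, but note that the paper does not actually prove this proposition: it is quoted from \cite{henrich} as background, with no proof supplied. The closest thing the paper contains is the proof of Proposition~3.1.1 for the self-crossing polynomial $p_{sc}$, which is the tangle generalization of $p_t$, and your write-up tracks that argument almost move for move. Both treat RM1 by observing that the kink smooths to two non-intersecting pieces so $|i(c)|=0$, treat RM2 by pairing the two new crossings with opposite signs and equal index, and handle RM3 and the mixed moves by a sign-and-index-preserving bijection of crossings (the paper phrases this as the flat classes agreeing). For the finite-type part the paper expands $T_{dd'}=T_{++}-T_{+-}-T_{-+}+T_{--}$ directly and cancels in one pass; you instead compute the first derivative explicitly as $2(t^{|i(d)|}-1)$ and then iterate. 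These are the same computation organized differently, and your version has the minor bonus that the nonvanishing on one double point is immediate from the formula for the first derivative.

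One small remark: in your RM2 paragraph you also check that the two new crossings cancel inside $i(d)$ for every \emph{other} crossing $d$, and similarly for RM1. The paper's proof of Proposition~3.1.1 leaves this implicit (it is subsumed in the statement that the flat class after smoothing is preserved), so your account is if anything slightly more explicit here.
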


\begin{prop}[\cite{longframedfti}]
If $K$ is a long virtual knot, the polynomial $$p_t(K)=\sum_{c}sgn(c)(t^{|i(d)|}-1)\in \Z[t],$$ where $c$ is taken over all the classical crossings of $K$, is an order one Vassiliev invariant of long virtual knots.
Suppose that in the computation of $i(d)$ you systematically order the components by always picking the long component to be component one. Then $i(d)$ is now well-defined, and $$\overline{p_t}(K)=\sum_{c}sgn(c)(t^{i(d)}-1)\in \Z[t]$$ is an order one Vassiliev invariant of long virtual knots, which is stronger than $p_t(K)$.
\end{prop}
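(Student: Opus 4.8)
The plan is to follow the structure of Henrich's proof for the closed case, establishing for each of $p_t$ and $\overline{p_t}$ two things: that it is invariant under the (classical and virtual) Reidemeister moves for long virtual knots, and that it vanishes on every long virtual knot carrying at least two transverse double points. The single observation that makes everything go through is that the intersection index $i(c)$ of a classical crossing $c$ is computed entirely from the \emph{flat} diagram obtained after smoothing $c$ according to orientation; in particular, switching a classical crossing from positive to negative, or exchanging over/under at any crossing, leaves the flat diagram (and hence every intersection index) unchanged. So the only effect of a crossing change is on the single sign $sgn(c)$ appearing in the sum.

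First I would settle the well-definedness asserted for $\overline{p_t}$. Smoothing a classical crossing $c$ of a long virtual knot according to orientation always yields exactly one long component (the one carrying the two endpoints at infinity) and one closed component: parametrising the knot as an oriented arc and letting the two preimages of $c$ sit at parameters $p_1<p_2$, the oriented resolution reconnects the arc from the start to $p_1$ with the arc from $p_2$ to the end into the long component and closes the arc from $p_1$ to $p_2$ into a loop. Consequently the rule ``the long component is component one'' removes the sign ambiguity of Fig. \ref{flatlinksign}, so the signed integer $i(c)\in\Z$ --- not merely $|i(c)|$ --- is well-defined, and $\overline{p_t}(K)\in\Z[t]$ makes sense.

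Next I would verify Reidemeister invariance move by move, for both polynomials simultaneously (reading $|i(c)|$ for $p_t$ and the signed $i(c)$ for $\overline{p_t}$). The virtual moves involve no classical crossing and do not alter the flat intersection data, so they change nothing. A Reidemeister I kink, once smoothed, splits off a crossingless loop, so its intersection index is $0$ and it contributes $sgn(c)(t^{0}-1)=0$; moreover it is not counted in the index of any other crossing. The two crossings created by a Reidemeister II move carry opposite signs and identical intersection index, so their contributions cancel. Reidemeister III preserves the signs of the three crossings, and one checks that it preserves each of their intersection indices as well; since all moves are local they also preserve the long/closed split, so the canonical ordering used for $\overline{p_t}$ stays consistent throughout. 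I expect the bookkeeping for Reidemeister III --- tracking exactly which crossings lie between the two components in each of the three smoothings --- to be the main obstacle, as in the closed case.

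Finally I would prove order one. By the flat-invariance observation, changing a single crossing $c_0$ alters only $sgn(c_0)$, so the value on a knot with one double point at $c_0$ is $p_t(K_+)-p_t(K_-)=2\bigl(t^{|i(c_0)|}-1\bigr)$ (respectively $2\bigl(t^{i(c_0)}-1\bigr)$), a quantity depending only on the flat index of $c_0$. Resolving a second double point $c_1$ does not change the flat diagram seen by $c_0$, hence does not change $i(c_0)$, so the second difference vanishes and the invariant is of order $\le 1$; exhibiting a long knot with a single crossing of nonzero index shows the order is exactly one. For the comparison, reading off from $\overline{p_t}(K)=\sum_{k\neq 0}a_k(t^{k}-1)$ the coefficients $a_k=\sum_{c:\,i(c)=k}sgn(c)$ recovers $p_t(K)=\sum_{k>0}(a_k+a_{-k})(t^{k}-1)$, so $\overline{p_t}$ determines $p_t$; a single example on which $\overline{p_t}$ differs while $p_t$ agrees then shows it is strictly stronger.
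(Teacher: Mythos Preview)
The paper does not actually prove this proposition; it is quoted from \cite{longframedfti} as background. That said, your outline is correct and is essentially the same argument the paper gives for the tangle generalisation $p_{sc}$: a Reidemeister~I kink contributes $t^0-1=0$; the two Reidemeister~II crossings have opposite sign and equal (flat) intersection index so cancel; Reidemeister~III has a sign- and index-preserving bijection between the crossings on either side; and order $\le 1$ follows because $i(c)$ depends only on the flat class, hence is unchanged by resolving any other double point. Your observation that the oriented smoothing of a long knot always produces exactly one long and one closed component is precisely the reason $i(c)$ (not just $|i(c)|$) is well defined in this setting.

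Two small points. First, to conclude that $\overline{p_t}$ is \emph{strictly} stronger than $p_t$ you still need to exhibit a concrete pair of long virtual knots with the same $p_t$ but different $\overline{p_t}$; you note this but do not supply the example, so the proof as written is incomplete on that last clause. Second, once you drop the absolute value the exponent $i(c)$ may be negative, so $\overline{p_t}$ really lands in $\Z[t,t^{-1}]$ rather than $\Z[t]$; the paper's statement carries the same slip, so this is not your error, but it is worth flagging.
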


%------------------------------------------------------------------------------------

\section{The virtual tangle invariants and their properties}

\subsection{The polynomial invariants}
\label{tangleinvariants}
Because it is our goal to generalize the Henrich-Turaev polynomial to virtual tangles, we will use it as a starting point (see section \ref{longknotinvariants}). 
It turns out that we need to consider polynomials in multiple variables to be able to generalize the polynomial invariant.

\begin{defin}
Let $T$ be an unoriented virtual tangle on $n$ ordered components $T_1,\ldots, T_n$, and assign to each component a variable $t_i$.
The self-crossing polynomial is the polynomial in $\Z[t_1,\ldots, t_n]$ defined by
$$p_{sc}(T)=\sum_{i=1}^n\sum_{c\in T_i} sgn(c)(t_i^{|i(c)|}-1)$$
where $c$ is a classical self-crossing of component $T_i$, $sgn(c)=\pm 1$ is the sign of the crossing, and $i(c)$ is the intersection index defined in section \ref{longknotinvariants}. 
In computing the intersection index, we ignore the crossings involving other components of the tangle besides the two we created in the smoothing.
\end{defin}

\begin{remark}
Changing the orientation of a strand doesn't have an effect on the sign of a self-crossing (as is the case for knots), so the quantity $sgn(c)$ is well-defined for an unoriented tangle.
\end{remark}

\begin{prop}
The self-crossing polynomial is a tangle FTI of order $\leq 1$.
\end{prop}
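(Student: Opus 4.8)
The plan is to establish the two ingredients of an order $\leq 1$ FTI in turn: invariance of $p_{sc}$ under the (virtual) Reidemeister moves, and the vanishing of its singular extension on any tangle carrying at least two double points.

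For invariance I would first record the key structural feature of $p_{sc}$: it never sees a crossing between two distinct components, both because such crossings are excluded from the outer sum and because, by definition, they are discarded when each index $i(c)$ is computed. This lets me write $p_{sc}(T)=\sum_{i=1}^n p_t(T_i)$, where $T_i$ denotes the $i$-th component taken in isolation (all other components erased), a virtual or long virtual knot for which $p_t$ is already known to be invariant by the cited propositions. It then remains to check that a generalized Reidemeister move on $T$ acts on each isolated component either as a planar isotopy, as an internal move of that component, or --- in the R3 case where a strand of a different component slides past a self-crossing --- not at all on its self-crossings or their indices. Organizing the verification by how the participating strands are distributed among components, the internal moves are absorbed by the cited results and the mixed cases leave every $T_i$ unchanged, so $p_{sc}$ is invariant.

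For the finite-type bound I would compute the first difference at a double point $D$, resolving it as a positive minus a negative crossing. Two observations drive the computation: positive and negative crossings have the same oriented smoothing, so the ordered $2$-component object defining $i(D)$ is identical in both resolutions; and $i(D)$ is evaluated on the flat diagram, which retains no over/under information. Hence flipping the sign at $D$ alters only the term contributed by $D$ itself, and only when $D$ is a self-crossing, giving $p_{sc}(K_+)-p_{sc}(K_-)=2\left(t_i^{|i(D)|}-1\right)$ when $D$ is a self-crossing of $T_i$, and $0$ when $D$ joins two distinct components.

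It remains to take the second difference at a pair of double points $D_1,D_2$, i.e. to show $p_{sc}(K_{++})-p_{sc}(K_{+-})-p_{sc}(K_{-+})+p_{sc}(K_{--})=0$. Regrouping this as the difference of the first differences at $D_1$ for the two resolutions of $D_2$, the exponent $|i(D_1)|$ entering each is read off the flat diagram; since resolving $D_2$ positively or negatively produces the same flat crossing, the two flat diagrams coincide and the two first differences are literally equal, so their difference vanishes. I expect the genuine obstacle to lie entirely in the invariance step --- specifically in the R3 case analysis, where one must confirm that sliding a strand of one component across the self-crossings of another changes neither those crossings' signs nor their flat intersection indices; granting that, the order $\leq 1$ bound reduces to the short telescoping computation above.
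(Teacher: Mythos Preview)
Your proposal is correct and the finite-type half is essentially the paper's argument rephrased in terms of first and second differences; the key observation in both is that $|i(c)|$ is read off the flat diagram and hence is insensitive to the sign chosen at any resolved double point.

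The invariance half is where you genuinely diverge. The paper verifies each Reidemeister move directly on the tangle diagram (R1 contributes $\pm(t_i^0-1)=0$, R2 gives two opposite-sign crossings with homotopic smoothings, and for the remaining moves there is a sign- and index-preserving bijection between crossings on the two sides). You instead exploit the structural fact that, because both the outer sum and the computation of $i(c)$ discard mixed crossings, one has literally $p_{sc}(T)=\sum_i p_t(T_i)$ with $T_i$ the $i$-th component in isolation, and then invoke the cited invariance of $p_t$ for (long) virtual knots. This buys you a clean reduction to known results, at the cost of having to check that ``take the $i$-th component in isolation'' is itself well defined under Reidemeister moves on $T$; you correctly flag the mixed R3 case as the only nontrivial one. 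That check is most transparent at the Gauss-diagram level (keep only the self-arrows on the $i$-th core), where every generalized Reidemeister move either leaves those arrows untouched or acts as an ordinary move on them. The paper's direct approach avoids this auxiliary lemma but has to run through the moves by hand; your approach trades that for a short well-definedness check and a citation.
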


\begin{proof}
To check whether the polynomial is a virtual tangle invariant, we must ensure invariance under the tangle isotopy moves. Since only self-crossings are involved in the definition, we only need to verify the cases where there is at least a classical crossing and where both strands in a crossing belong to the same component.
In the first Reidemeister move case, after smoothing the crossing we get two components that don't intersect, so the total contribution of a kink is $\pm (t_i^0-1)=0$, which is the same as the contribution of the straight strand with no kinks.
In the second Reidemeister move case, the two crossings have opposite signs, and the flat tangles obtained after smoothing the crossings are homotopic (hence they have the same intersection index), so the two contributions cancel each other out.
In every other case, there is a correspondence between the crossings on either side of the move that respects both the signs of the crossings and their intersection indices, so the contributions on both sides of a move are the same.
In fact, the correspondence respects not just the intersection index, but also the flat class of the tangles obtained after the smoothing (from which we recover $i(c)$).

Now that we proved that $p_{sc}$ is a virtual tangle invariant, we need to check it's a FTI of order $\leq 1$.
The argument why it vanishes on any tangle with two double points is the same as in the virtual knot case: suppose that $d, d'$ are the two double points of tangle $T$.
We then expand the double points as \begin{equation}T_{dd'}=T_{++}-T_{+-}-T_{-+}+T_{--},\label{eqresolution}\end{equation}
where $+$ and $-$ represent the sign we picked in the resolution of $d, d'$.
If $d$ or $d'$ is a double point involving distinct components, either resolution will not contribute to the value $p_{sc}$. 
If the double point is a self-crossing, we note that the intersection index is the same for the positive and negative resolutions; because each resolution appears in both a positive and negative term in the expansion of equation \ref{eqresolution}, the overall contribution of each double point is zero.
Finally, the contributions coming from all the other self-crossings are the same in each term of the expansion; because of the alternating signs in the expansion, the overall contribution of these terms is also zero.
This shows that $p_{sc}(T_{dd'})=0$ no matter what $d,d'$ are.

We can also easily provide an example of a singular virtual tangle with $p_{sc}(T)\neq 0$ by repurposing the long virtual knot example from \cite{longframedfti}; after all, long virtual knots are just tangles with one component.
Figure \ref{pscnonzero} shows a tangle for which $p_{sc}(T)\neq 0$: resolving the double point one way gives (after some work) a string with no crossing, while the other resolution has a nonzero value.

\begin{figure}[!h]
\centering
\includegraphics[scale=.15]{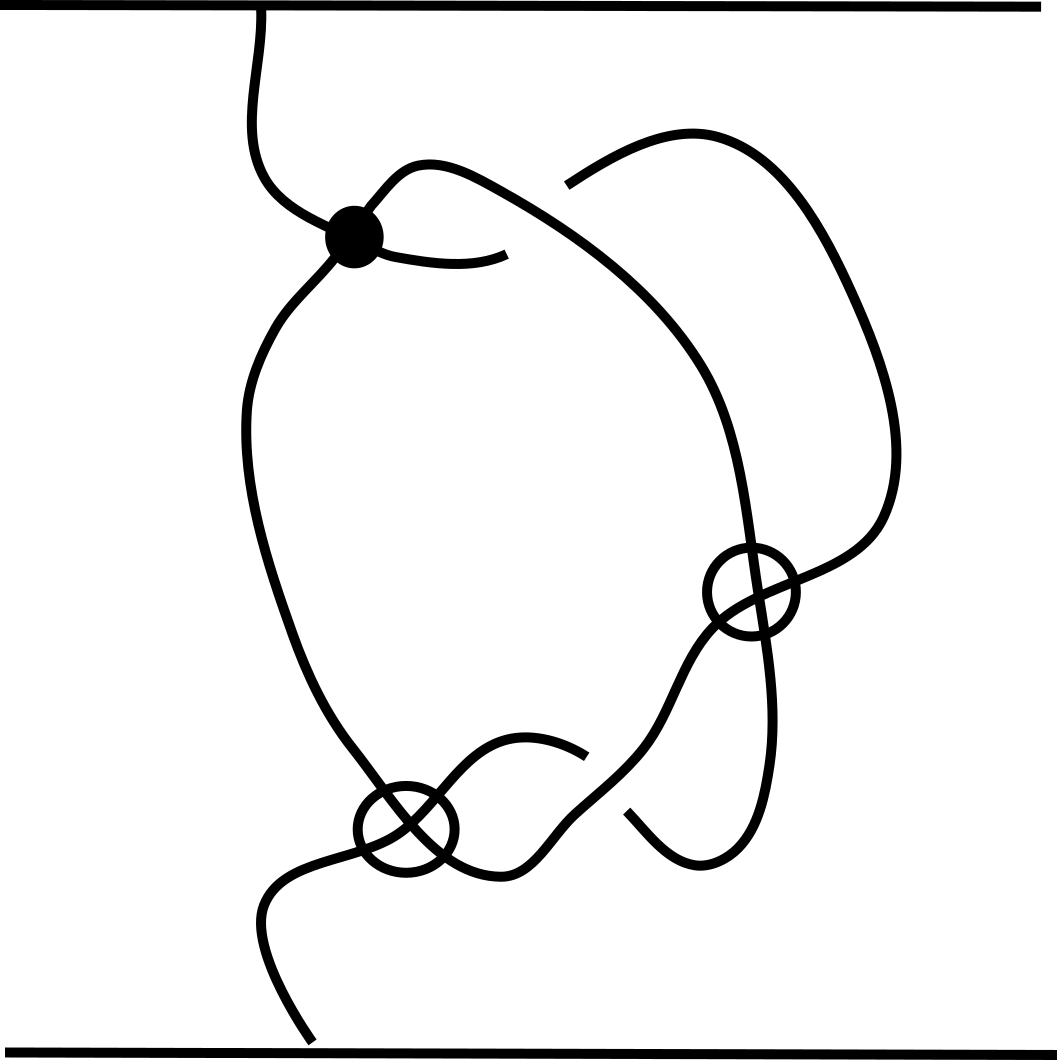}
\caption{A singular virtual tangle (in fact, a singular long virtual knot) for which $p_{sc}$ is nonzero.}
\label{pscnonzero}
\end{figure}

\end{proof}

While the self-crossing polynomial is a generalization of the Henrich-Turaev polynomial to the virtual tangle case, it does not see the way that the components are linked, which makes it a very coarse invariant.
We aim to solve this issue by extending the self-crossing polynomial to two infinite families of polynomial invariants, which we call the linking number polynomials.

\begin{defin}
Let $T$ be an oriented virtual tangle on $n$ components, whose components $T_1,\ldots, T_n$ are ordered, and assign to each component a variable $t_i$. Fix two rational numbers $a,b\in \Q$.
The linking number polynomials are the polynomials in the variables $t_1,\ldots, t_n$ defined by
$$p_{lk}(T)=p_{sc}(T)+\sum_{\substack{i\neq j\\ i<j}}[a\cdot vlk(T_i, T_j)+b\cdot vlk(T_j, T_i)]t_it_j$$
$$p_{lk,L}(T)=p_{sc}(T)+\sum_{\substack{i\neq j\\ i<j}}a\cdot vlk(T_i, T_j)t_it_j^{-1}+b\cdot vlk(T_j, T_i)t^{-1}_it_j$$
where $vlk$ is the virtual linking number defined in section \ref{virtualknotsandtangles}.
\end{defin}

\begin{remark}
A few remarks about these definitions:
\begin{itemize}
\item The invariant $p_{lk, L}$ is technically a Laurent polynomial, not a polynomial (hence the subscript $L$).
\item We include the $p_{sc}$ terms because we still require these polynomials to generalize the Henrich-Turaev polynomial.
\item We must restrict our attention to oriented virtual tangles because the sign of a crossing involving two components will change if we swap the orientation of only one of the components.
\item In this paper, we will assume that the numbers $a,b$ are rational and fixed at the beginning of the computation, and that they are the same for each pair of components. The author is interested in analyzing how the invariants behave when different pairs of components get different coefficients, or what the relation is between $p_{lk}$ and $p_{lk, L}$ with different values of $a$ and $b$, in future work. 
\item In the definition of $p_{lk}$, the choice $a=b$ makes the coefficient of $t_it_j$ a multiple of the total writhe between the components $T_i, T_j$, while the choice $a=-b$ makes the coefficient a multiple of the Wriggle number $W(T_i, T_j)$.
\end{itemize}
\end{remark}

\begin{prop}
The linking number polynomials are order one Vassiliev invariants of virtual tangles.
\end{prop}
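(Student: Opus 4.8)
The plan is to split the verification into three parts, concentrating on the new linking terms since the $p_{sc}$ summand has already been shown to be an order $\leq 1$ virtual tangle invariant. Write $L(T)=\sum_{i<j}[a\cdot vlk(T_i,T_j)+b\cdot vlk(T_j,T_i)]t_it_j$ for the extra term in $p_{lk}$, and $L_L(T)$ for the corresponding term of $p_{lk,L}$ with monomials $t_it_j^{-1},\,t_i^{-1}t_j$. It then suffices to check that (i) $L$ is itself a virtual tangle invariant, (ii) $p_{lk}$ vanishes on every tangle carrying at least two double points, and (iii) $p_{lk}$ does not vanish on all singular tangles with a single double point, so that its order is exactly one. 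The same three steps handle $p_{lk,L}$.

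For (i), I would reduce invariance of $L$ to invariance of the individual virtual linking numbers $vlk(T_i,T_j)$. The ordering of the components, and hence the assignment of the variables $t_i$ and of the monomial attached to each ordered pair, is part of the data of the tangle and is preserved by every (virtual) Reidemeister move, so the monomial coefficients are never permuted. Thus I only need each $vlk(T_i,T_j)$ to be unchanged under the moves: moves involving only virtual crossings or only self-crossings of a single component do not alter any count of $T_i$-over-$T_j$ crossings; a Reidemeister 2 move between $T_i$ and $T_j$ creates or removes two crossings of the same over/under type and opposite sign, which cancel in the signed count; and a Reidemeister 3 move permutes the inter-component crossings while preserving their signs and over/under data. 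Since $vlk$ is exactly the invariant recalled in Section \ref{virtualknotsandtangles}, this is essentially immediate, and the argument applies verbatim to $L_L$.

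For (ii), I would expand a tangle $T_{dd'}$ with two double points $d,d'$ as in equation \ref{eqresolution}. The $p_{sc}$ part already vanishes, so I focus on $L$. The key observation is that $vlk(T_i,T_j)$ is a signed sum over classical crossings, so its dependence on the two resolutions is additively separable: fixing all crossings other than $d,d'$, one has $vlk(T_i,T_j)=V_{ij}+f_{ij}(\epsilon_d)+g_{ij}(\epsilon_{d'})$, where $\epsilon_d,\epsilon_{d'}\in\{+,-\}$ record the chosen resolutions and $V_{ij}$ collects the contribution of all remaining crossings. The alternating sum in equation \ref{eqresolution} is precisely the second mixed difference in $(\epsilon_d,\epsilon_{d'})$, which annihilates any function of this separable form, while the monomials are constant across all four resolutions and factor out. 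Hence $L(T_{dd'})=0$, so $p_{lk}(T_{dd'})=0$; the Laurent monomials cause no change, giving $p_{lk,L}(T_{dd'})=0$ as well.

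For (iii), it is enough to exhibit one singular tangle with a single double point on which the invariant is nonzero. The long-knot example of Fig. \ref{pscnonzero} already shows $p_{sc}$, hence $p_{lk}$, is nonzero on a one-component singular tangle; to show that the genuinely new linking terms also register at order one, I would singularize an inter-component crossing of a two-component tangle whose two resolutions yield different values of $vlk(T_1,T_2)$ (and $vlk(T_2,T_1)$), so that the coefficient of $t_1t_2$ changes. Combined with (ii) this pins the order at exactly one. I expect the only delicate point to be the bookkeeping in step (ii): one must be sure that resolving one double point does not alter the contribution of the other to any $vlk$, i.e. that there is no cross term coupling $d$ and $d'$. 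This is exactly what additive separability guarantees, and it is what forces the second difference to vanish regardless of whether $d,d'$ are self-crossings, inter-component crossings of the same pair, or of different pairs.
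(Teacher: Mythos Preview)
Your proof is correct and follows essentially the same strategy as the paper: verify invariance of the new linking terms under the Reidemeister moves (reducing to invariance of each $vlk$), show vanishing on tangles with two double points via the four-term expansion of equation~\ref{eqresolution}, and reuse Fig.~\ref{pscnonzero} to witness order exactly one. Your step (ii) is phrased more cleanly as the second mixed difference of an additively separable function of $(\epsilon_d,\epsilon_{d'})$, whereas the paper argues case-by-case on whether each double point is a self-crossing or involves two distinct components; both arguments encode the same cancellation.
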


\begin{proof}
Since the self-crossing polynomial is an invariant and the extra terms we added do not influence the value of $p_{sc}$, to check that the linking number polynomials are invariants we just need to verify the invariance under the Reidemeister moves of the new terms. 
Also note that Reidemeister move one only adds/removes self-crossings, so we can restrict our attention the the other Reidemeister moves.
However, the same arguments that worked for the self-crossing polynomial work in this case: the second Reidemeister move adds two crossings of opposite signs involving the same pair of strands, so the overall contribution is zero, while Reidemeister move three has a correspondence between the crossings (and their signs) before and after the move, so the contribution is the same on both sides of the move.

To show that our polynomials vanish on any tangle with two double points, a similar argument to the self-crossing polynomial case holds, albeit we need to be careful about what strands are glued at the double point.
If a double point involves two strands of the same component, the intersection index is the same for both the positive and the negative resolution, and each of these appears both with a positive and a negative sign in front of it, so the overall contribution of that double point is zero.
If a double point involve two strands belonging to different components $T_i$ and $T_j$, we need to keep track of the sign of the resolved crossing, as well as which strand lies on top. 
But the resolution where strand $T_i$ is on top appears twice in the expansion, once with positive coefficient and once with negative coefficient, so its contribution is zero, and similarly the resolution where strand $T_j$ is on top appears twice, so its total contribution is also zero.
Finally, every other crossing will appear twice with the sign $+$ and twice with the sign $-$, so their overall contribution is zero. 
This concludes the argument that $p_{lk}(T)$ and $p_{lk, L}(T)$ are always zero if $T$ has two double points.

Finally, Fig. \ref{pscnonzero} is still an example of a tangle with nonzero oriented polynomial (as there is only one component whose $p_{sc}$ is nonzero).
\end{proof}

\begin{remark}
These polynomials could be slightly generalized using the ``ordered polynomial'' definition from \cite{longframedfti} for the self-crossing polynomial.
\end{remark}

%------------------------------------------------------------------
\subsection{Strength and properties of the polynomials}
\label{strengthofinvariants}

As a reminder, invariant $A$ is stronger than invariant $B$ if whenever $A(K)=A(K')$ we also get $B(K)=B(K')$, and there is a pair of tangles for which $B(T_1)=B(T_2)$ but $A(T_1)\neq A(T_2)$.

\begin{prop}
The linking polynomial invariants are stronger than the self-crossing polynomial.
The Laurent polynomial $p_{lk, L}$ is stronger than $p_{lk}$.
\end{prop}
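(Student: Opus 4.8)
The plan is to verify, for each pair, the two halves of the definition of ``stronger'': first that the stronger invariant determines the weaker one (so that equal values of the former force equal values of the latter), and then to exhibit an explicit pair of tangles separated by the stronger invariant but not by the weaker one. The whole argument rests on one structural observation: the three polynomials are built on disjoint sets of monomials. In $p_{sc}$ every monomial is a power $t_i^{k}$ of a single variable (together with the constants coming from the $-1$ terms); in $p_{lk}$ the linking contributions live on the genuinely bilinear monomials $t_it_j$ with $i<j$; and in $p_{lk,L}$ they live on $t_it_j^{-1}$ and $t_i^{-1}t_j$, each of which carries exactly one negative exponent.

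For the ``determines'' direction I would argue as follows. Since $p_{sc}(T)$ is exactly the part of $p_{lk}(T)$ supported on monomials in a single variable, it is recovered from $p_{lk}(T)$ by the fixed linear operation of setting all but one variable to zero, summing over the choice of surviving variable, and subtracting $(n-1)$ times the constant term to undo the resulting overcounting of the $-1$ contributions. Thus $p_{lk}(T)=p_{lk}(T')$ forces $p_{sc}(T)=p_{sc}(T')$. For $p_{lk,L}$ the same role is played by the projection onto monomials with all exponents nonnegative: the linking terms all carry a $t^{-1}$, so the nonnegative part of $p_{lk,L}(T)$ is precisely $p_{sc}(T)$. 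Moreover $p_{lk,L}(T)$ determines $p_{lk}(T)$: reading off the coefficient $a\cdot vlk(T_i,T_j)$ of $t_it_j^{-1}$ and the coefficient $b\cdot vlk(T_j,T_i)$ of $t_i^{-1}t_j$, their sum is the coefficient of $t_it_j$ in $p_{lk}(T)$, and the $p_{sc}$ parts already agree. Hence equality of $p_{lk,L}$ forces equality of both $p_{sc}$ and $p_{lk}$.

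For the strictness I would call on Lemma \ref{anylinkingnumber}. To separate the linking polynomials from $p_{sc}$, compare the two-component unlink with a two-component virtual link whose only crossings are between the two components (so that $p_{sc}=0$ for both) and whose linking pair $(vlk(T_1,T_2),vlk(T_2,T_1))$ is chosen, using the lemma, so that $a\cdot vlk(T_1,T_2)+b\cdot vlk(T_2,T_1)\neq 0$; this is possible as soon as $(a,b)\neq(0,0)$. Both tangles have $p_{sc}=0$, but the linking polynomials differ. To separate $p_{lk,L}$ from $p_{lk}$, I would instead realize two such links whose linking pairs $(\alpha,\beta)$ and $(\alpha',\beta')$ satisfy $a\alpha+b\beta=a\alpha'+b\beta'$ but $a\alpha\neq a\alpha'$; concretely, take the unlink against a link whose linking pair $(u,v)$ is a nonzero integer vector proportional to $(-b,a)$, which lies in the kernel of $(x,y)\mapsto ax+by$ and has nonzero first coordinate precisely because $a,b\neq 0$. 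These two tangles share the same $p_{lk}$ (the $t_1t_2$ coefficient is $0$ in both) but have different $p_{lk,L}$ (the $t_1t_2^{-1}$ coefficients are $0$ and $au\neq 0$).

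The routine half is the ``determines'' direction, which is pure monomial bookkeeping once the disjoint-support observation is in place. The part that needs care is the construction of the separating examples: I must be sure the chosen links really have vanishing self-crossing polynomial and exactly the prescribed virtual linking numbers, which is what Lemma \ref{anylinkingnumber} (and the explicit Gauss-diagram construction in Fig. \ref{anylinkingnumbers}) guarantees, since that construction introduces only inter-component crossings. The one genuine caveat is degeneracy in the parameters: the second statement is strict only when both $a$ and $b$ are nonzero (if, say, $b=0$ then $p_{lk}$ and $p_{lk,L}$ record the same data and are equally strong), while the first requires merely $(a,b)\neq(0,0)$; I would state these hypotheses explicitly.
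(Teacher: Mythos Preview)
Your proposal is correct and follows essentially the same strategy as the paper: the ``determines'' direction via disjoint monomial support, and strictness via explicit two-component examples with prescribed virtual linking numbers (the paper uses a concrete figure for $p_{lk}$ versus $p_{sc}$ and then the kernel of $(x,y)\mapsto ax+by$ for $p_{lk,L}$ versus $p_{lk}$, exactly as you do). Your explicit caveat about the degenerate cases $a=0$, $b=0$, or $a=b=0$ is in fact an improvement on the paper, which silently assumes generic parameters; the paper's own separating example for $p_{lk,L}$ versus $p_{lk}$ fails when one of $a,b$ vanishes, just as you note.
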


\begin{proof}
Since the linking polynomials contain $p_{sc}$ and the extra terms don't influence the coefficients of $t_i^k$ for all $k$, if two tangles have the same linking number polynomial they also clearly have the same self-crossing polynomial.
If $a\neq -b$, two tangles with $p_{sc}=0$ but different values of the linking polynomials are shown in Fig. \ref{orderedstrongerthansc}: the tangle on the left is unlinked, so all the polynomials are zero, while the two crossings in the tangle on the right are both positive, so the linking polynomials are $$p_{lk}(T_2)=(a+b)t_1t_2\qquad p_{lk, L}(T_2)=at_1t_2^{-1}+bt_1^{-1}t_2$$

\begin{figure}[!h]
\centering
\includegraphics[scale=.15]{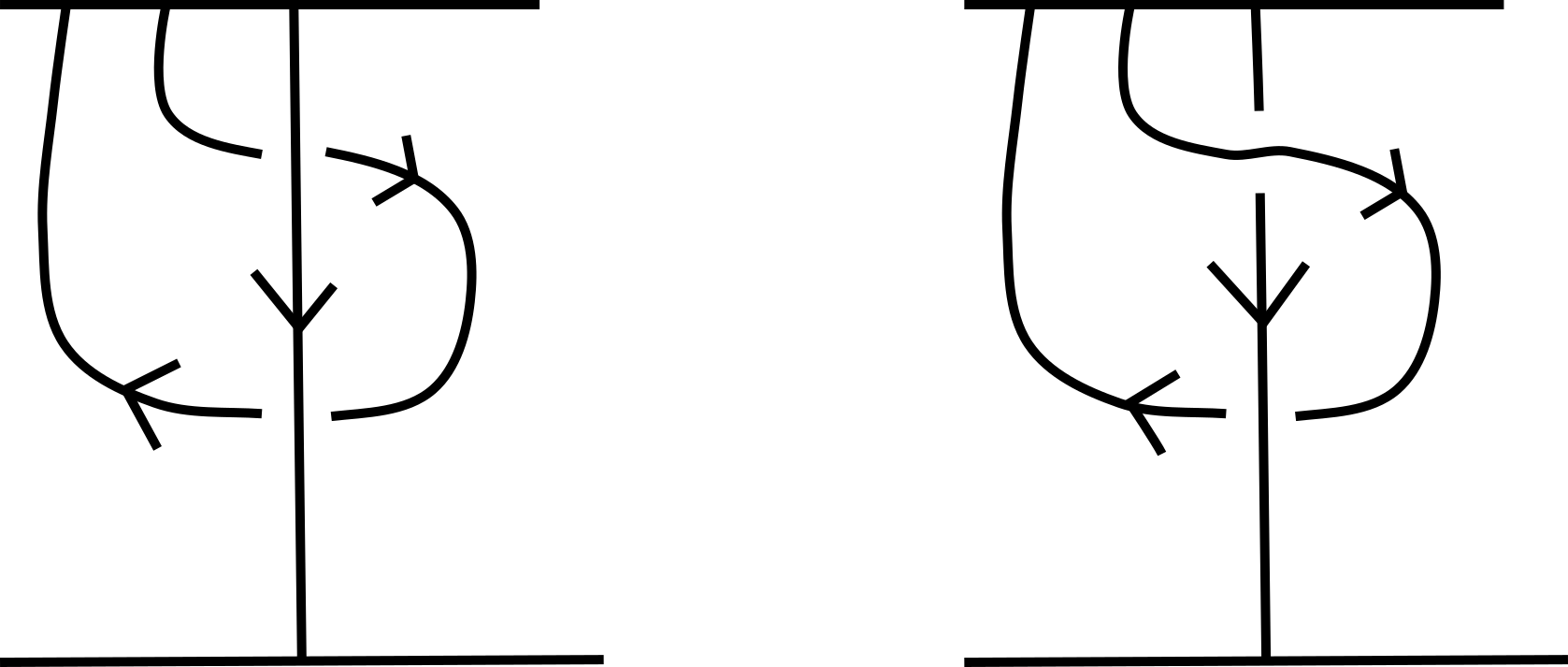}
\caption{Two tangles with the same self-crossing polynomial $p_{sc}=0$, but different linking polynomials (if $a\neq -b$). If $a=-b$ we can find a suitable pair by turning the bottom crossing virtual.}
\label{orderedstrongerthansc}
\end{figure}

If $a=-b$ the tangles in Fig. \ref{orderedstrongerthansc} still have different values of $p_{lk, L}$, but they now have the same value of $p_{lk} =0$ (so we can't use them to show $p_{lk}$ is stronger than $p_{sc}$). 
We can fix this by changing the bottom crossing to be virtual instead of classical; then one of the tangles will have $p_{lk}(T_1)=at_1t_2$, while the other has $p_{lk}(T_2)=bt_1t_2$, and since $a=-b$ we indeed get $p_{lk}(T_1)\neq p_{lk}(T_2)$.

If two tangles have the same $p_{lk, L}$, they also must have the same $p_{lk}$: from the expression of $p_{lk, L}$ we know that both tangles have the same values of $p_{sc}$ as well as the same values of linking numbers for every possible pair of components (as previously observed, the two linking numbers don't affect each other in the expression of $p_{lk, L}$).
As a result, the values $a\cdot vlk(T_i, T_j)+b\cdot vlk(T_j, T_i)$ is also the same for both tangles for every choice of $i,j$, so the two tangles have the same $p_{lk}$.

Finally, you can easily produce a non-trivial tangle that has $p_{lk}=0$: since $a,b\in\Q$, you can find two integers $m,n$ for which $am+bn=0$ (in fact, there are infinitely many such pairs). 
Then pick the two-strand braid $B$ whose closure is the two-component link with linking numbers $vlk(T_1, T_2)=m$, and $vlk(T_2, T_1)=n$, which we know exists by the Lemma \ref{anylinkingnumber}. 
Note that this braid has no self-crossings, so by construction $p_{lk}(B)=0$; this means that $p_{lk}$ is unable to distinguish this braid from the pure braid on two strands.
However, $p_{lk, L}(B)=amt_1t_2+bnt_2t_1\neq 0$, so the Laurent version successfully distinguishes $B$ from the pure braid on two strands, and is hence stronger than $p_{lk}$.
\end{proof}

\begin{prop}
If $T, U$ are virtual string links on $n$ components, all three polynomials are additive: $p(T\# U)=(p(T)+p(U))$. Moreover, since the polynomials take value in a commutative ring, $p(T\# U)=p(U\# T)$.

If $T, U$ are $(n,n)$ tangles where each long component connects a distinguished point at the top with one at the bottom, we can compute the connected sum $T\#U$, and component $T_i$ glues to component $U_j$, then all three polynomials are additive modulo identifying which component glues to which.

\end{prop}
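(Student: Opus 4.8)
The plan is to prove additivity by analyzing how each of the three polynomials behaves under the stacking operation $T\#U$, exploiting the fact that connected sum of string links (or the appropriate $(n,n)$ tangles) does not create any new crossings: the crossings of $T\#U$ are precisely the disjoint union of the crossings of $T$ and the crossings of $U$. Since all three polynomials are sums indexed by crossings, the natural strategy is to show that the contribution of each crossing is unchanged by the gluing, and then that the index sets add. I would treat the string link case first (where the component labeling is canonical) and then indicate the modifications needed for the more general $(n,n)$ case.

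First I would handle $p_{sc}$. A self-crossing $c$ of component $T_i$ in $T$ remains a self-crossing of the corresponding component in $T\#U$, and likewise for $U$; no self-crossings are created by the stacking. The only point requiring care is that $i(c)$ is computed from the flat class of the $2$-component link obtained by smoothing $c$ and discarding the other components. I would argue that smoothing $c$ inside $T\#U$ yields the same flat picture \emph{away from} the glued region as smoothing $c$ inside $T$, because in a string link the component $T_i$ enters and exits through its designated endpoints, so after smoothing and discarding the other strands the portion coming from $U$ contributes no crossings to the two smoothed sub-components (it is a collection of parallel strands that can be pushed off). Hence $i(c)$ and $\mathrm{sgn}(c)$ are preserved, and summing over the disjoint union of crossing sets gives $p_{sc}(T\#U)=p_{sc}(T)+p_{sc}(U)$.

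Next, for $p_{lk}$ and $p_{lk,L}$, the extra terms are governed entirely by the virtual linking numbers $vlk(T_i,T_j)$. The key claim is that $vlk$ is additive under connected sum of string links: $vlk((T\#U)_i,(T\#U)_j)=vlk(T_i,T_j)+vlk(U_i,U_j)$. This follows because a crossing where component $i$ passes over component $j$ in $T\#U$ is either a crossing inherited from $T$ or one inherited from $U$ (stacking creates no new inter-component crossings), and the sign is preserved; so the over-writhe sum splits as a sum over the two pieces. Since the coefficients $a,b$ and the monomials $t_it_j$ (respectively $t_it_j^{-1}$, $t_i^{-1}t_j$) are the same in both summands, linearity of the defining sums yields additivity of the full polynomials. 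Commutativity $p(T\#U)=p(U\#T)$ is then immediate: both equal $p(T)+p(U)$ in the commutative ring $\Z[t_1,\ldots,t_n]$ (or its Laurent analogue), even though $T\#U\neq U\#T$ as tangles.

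I expect the main obstacle to be the careful justification that $i(c)$ is preserved under gluing, since this is the one quantity that depends on the \emph{global} flat homotopy class rather than on a local crossing datum; the additivity of the linking numbers and of the crossing-indexed sums is essentially bookkeeping once that point is settled. For the general $(n,n)$ case the only change is that the identification of which component of $T$ glues to which component of $U$ is no longer the identity permutation, so I would phrase the result as additivity after relabeling along the gluing bijection $i\mapsto j$; the crossing-preservation and linking-number arguments go through verbatim once this relabeling is fixed, and commutativity need not persist because the gluing data is not symmetric.
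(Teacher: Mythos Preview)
Your proposal is correct and follows essentially the same approach as the paper: split the crossing set of $T\#U$ as the disjoint union of the crossings of $T$ and of $U$, check that $\mathrm{sgn}(c)$, $i(c)$, and the virtual linking numbers are all preserved under gluing, and conclude additivity; the general $(n,n)$ case is handled by relabeling (the paper phrases this as working in $\Z[t_1,\ldots,t_n,u_1,\ldots,u_n]/R$ with $R$ the relations $t_i=u_j$). The one place to sharpen is your justification that $i(c)$ is unchanged: the clean argument (which the paper uses) is that smoothing a self-crossing of the long component $T_i$ produces one closed piece and one long piece, the closed piece lies entirely in $T$'s box, and therefore there are no crossings between the two pieces inside $U$'s box---your parenthetical about ``a collection of parallel strands that can be pushed off'' is not quite the right picture, since the portion in $U$ is the single strand $U_i$ (possibly with its own self-crossings, which are irrelevant to $i(c)$), attached to the long piece rather than pushed off it.
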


\begin{proof}
Since virtual string links are always oriented from top to bottom, we can always take the connected sums $T\#U$ and $U\#T$; moreover, since each component goes from the $i$-th endpoint at the top to the $i$-th endpoint at the bottom, when we stack the tangles together we get another virtual string link whose $i$-th component is just the result of glueing the $i$-th component of $T$ with the $i$-th component of $U$.
As a result of that, the set of crossings between the $i$-th and $j$-th component of $T\#U$ is the union of the crossings between the $i$-th and $j$-th component of $T$ and the $i$-th and $j$-th component of $U$, and the same is true of $U\#T$.
Finally, note that the two components we get after smoothing a crossing in $T$ will not intersect $U$, so the connected sum has no effect on the index of a given crossing if, like in this case, the connected sum doesn't join two components of $T$. 
Of course, a similar observation is valid for the indices of the classical crossings of $U$.
The above observations imply that $p(T\#U)=p(T)+p(U)=p(U\#T)$, where $p$ is any of the three polynomial invariants.

Suppose we're dealing with the slightly more general case of two $(n,n)$ tangles $T, U$ where each long component connects an endpoint at the top to an endpoint at the bottom; these are essentially virtual string links under a permutation of the distinguished points, and possibly also having some closed components.
Each tangle has its own self-crossing polynomial $p_{sc}(T)\in\Z[t_1, \ldots, t_n]$, $p_{sc}(U)\in\Z[u_1, \ldots, u_n]$ (we use different variables to keep track of the components more easily).
Assuming that the connected sum $T\#U$ is well-defined (there could be issues with orientation), each long component of $T$ must be connected to a long component of $U$, say $T_i$ is glued to $U_j$ to form the $i$-th component of $T\#U$; let $R$ be the set of $n$ relations of the form $t_i=u_j$ that come from the above identification.
On the one hand the $i$-th component of $T\#U$ contributes to the self-crossing polynomial the terms
$$\sum_{c\in (T\#U)_i}sgn(c)(t_i^{|i(c)|}-1).$$

But the self-crossings of $(T\#U)_i$ are the union of the self-crossings of $T_i$ with those of $U_j$, and (as above) the values of the index of each crossing is unaffected by the connected sum. 
We can then rewrite the above expression as

$$\sum_{c\in (T\#U)_i}sgn(c)(t_i^{|i(c)|}-1)=\sum_{\substack{c\in (T\#U)_i\\c\in T_i}}sgn(c)(t_i^{|i(c)|}-1)+\sum_{\substack{c\in (T\#U)_i \\ c\in U_j}}sgn(c)(t_i^{|i(c)|}-1).$$
On the other hand, the sum of the contributions of the self-crossings of $T_i$ and the self-crossings of $U_j$ is
$$\sum_{c\in T_i}sgn(c)(t_i^{|i(c)|}-1)+\sum_{d\in U_j}sgn(d)(u_j^{|i(d)|}-1).$$
So up to identifying the dummy variables $t_i$ and $u_j$ (which we do by quotienting out by $R$), the terms are the same as the contribution of $(T\#U)_i$; if we took the sum over all components, we would thus get
$$p_{sc}(T\#U)=p_{sc}(T)+p_{sc}(U)\in \Z[t_1, \ldots, t_n, u_1, \ldots, u_n]/R.$$

The same reasoning extends to the linking number polynomials: suppose that component $T_i$ attaches to component $U_l$ to form $(T\#U)_i$, and component $T_j$ attaches to component $U_m$ to form $(T\#U)_j$, which gives us the same set $R$ of relations as in the self-crossing polynomial case.
The crossings between components $(T\#U)_i$ and $(T\#U)_j$ correspond to the union of the crossings between $T_i$ and $T_j$ and the crossings between $U_l$ and $U_m$; where $T_i$ goes over $T_j$, or $U_l$ goes over $U_m$, we have that $(T\#U)_i$ goes over $(T\#U)_j$.
Thus $$vlk((T\#U)_i, (T\#U)_j)=vlk(T_i, T_j)+vlk(U_l, U_m)\qquad vlk((T\#U)_j, (T\#U)_i)=vlk(T_j, T_i)+vlk(U_m, U_l).$$
Using these identities, and quotienting by the set $R$ of relations, we then get that
$$p_{lk}(T\#U)=p_{lk}(T)+p_{lk}(U)\in \Z[t_1, \ldots, t_n, u_1, \ldots, u_n]/R$$$$ p_{lk,L}(T\#U)=p_{lk,L}(T)+p_{lk,L}(U)\in \Z[t_1^{\pm1}, \ldots, t_n^{\pm1}, u_1^{\pm1}, \ldots, u_n^{\pm1}]/R$$

Note that for this type of tangles $T\#U$ and $U\#T$ are not necessarily both defined (the orientations could clash in one of the connected sums but not the other), so generally speaking $p(T\#U)\neq p(U\#T)$.

\end{proof}

\begin{remark}
For the most general tangle case, we are skeptical that any of these polynomial invariants is additive; the main issue is that connected sum with an appropriate tangle (containing, for example, a cup or a cap) could merge two components that are separate in the original tangle, which could influence the value of the intersection index.
\end{remark}

\section{Future work}
We consider this the first in what will (hopefully) be a series of papers on the topics of virtual tangles, index polynomials and finite-type invariants. We are currently investigating a series of different projects related to the results of this paper, of which we list the ones of most immediate interest. 
The lack of additivity of the polynomials for general tangles (which we hope to prove soon) is our greatest regret. 
Since every knot can be decomposed as the connected sum of virtual tangles, we would ultimately like to have a polynomial invariant for tangles that is additive and that returns the Henrich-Turaev polynomial (or some other index polynomial) when applied to the tangle decomposition of a virtual knot.
We are currently working on relating the invariants in this paper with the affine index polynomial and the wriggle polynomial from \cite{linkingnumberaffineindex}, in hopes of finding a definition that would satisfy the above requirements.

The author and a collaborator are also working on reinterpreting these results in terms of GPV finite-type invariants. A thorough study of GPV invariants for tangles seems to be missing from the literature, so we hope to fill that gap.
As a long-term project, we would like to generalize the other two invariants (smoothing and glueing) from \cite{henrich} to the case of virtual tangles, and possibly find a universal finite-type invariant of order one for virtual tangles.

The author would like to acknowledge his institution, Oxford College of Emory University, for research support during the development of this paper.

\bibliographystyle{amsalpha}
\bibliography{fullbibliography}

\end{document}